%
%
%
%
\documentclass{compositio}

\usepackage{graphicx}
\usepackage[font=small,labelfont=bf]{caption}
\usepackage{epstopdf}
\usepackage{enumitem}
\usepackage{lipsum}
\usepackage{amsthm}
\usepackage{amsmath,amsfonts,amsthm,bm}
\usepackage{verbatim}
\usepackage{tikz}
\usetikzlibrary{positioning}
\usetikzlibrary{arrows}
\usetikzlibrary{decorations.markings}
\usepackage{yhmath}
\usepackage{relsize}
\usepackage{calc}
\usepackage{mathrsfs}

\newtheorem{theorem}{Theorem}[section]
\makeatletter
\newcommand{\proofcase}[2]{%
  \par
  \addvspace{\medskipamount}%
  \noindent\emph{Case #1: #2}%
  \@afterheading
}
\makeatother
\makeatletter
\newcommand{\proofstep}[2]{%
  \par
  \addvspace{\medskipamount}%
  \noindent\emph{Step #1: #2}%
  \@afterheading
}
\makeatother
\newtheorem{lemma}[theorem]{Lemma}
\newtheorem{corollary}[theorem]{Corollary}

\theoremstyle{definition}
\newtheorem{definition}[theorem]{Definition}
\newtheorem{conjecture}[theorem]{Conjecture}

\theoremstyle{remark}
\newtheorem{remark}[theorem]{Remark}

\numberwithin{equation}{section}



\makeatletter

\makeatother

\makeatletter
\def\blfootnote{\xdef\@thefnmark{}\@footnotetext}
\makeatother

\begin{document}

\title{Virtually Unipotent Curves in Some Non-NPC Graph Manifolds}

\author{Sami Douba}
\email{sami.douba@mail.mcgill.ca}
\address{Department of Mathematics and Statistics, McGill University, Montreal, QC H3A 0B9}
\classification{Primary: 20F67; Secondary: 20F65}

\begin{abstract}
Let $M$ be a graph manifold containing a single JSJ torus $T$ and whose JSJ blocks are of the form $\Sigma \times S^1$, where $\Sigma$ is a compact orientable surface with boundary. We show that if $M$ does not admit a Riemannian metric of everywhere nonpositive sectional curvature, then there is an essential curve on $T$ such that any finite-dimensional linear representation of $\pi_1(M)$ maps an element representing that curve to a matrix all of whose eigenvalues are roots of $1$. In particular, this shows that $\pi_1(M)$ does not admit a faithful finite-dimensional unitary representation, and gives a new proof that $\pi_1(M)$ is not linear over any field of positive characteristic.
\end{abstract}

\maketitle

\section{Introduction}
A matrix $P \in \mathrm{GL}(n, \mathbb{F})$, where $\mathbb{F}$ is a field, is {\it unipotent} if $1$ is the only eigenvalue of $P$ over the algebraic closure $\overline{\mathbb{F}}$ of $\mathbb{F}$. We say a matrix $P \in \mathrm{GL}(n, \mathbb{F})$ is {\it virtually unipotent} if some power of $P$ is unipotent, that is, if the eigenvalues of $P$ are all roots of $1$ in $\overline{\mathbb{F}}$. The only matrix in $\mathrm{GL}(n, \overline{\mathbb{F}})$ that is both unipotent and diagonalizable is the identity matrix; thus, a matrix in $\mathrm{GL}(n, \overline{\mathbb{F}})$ that is both virtually unipotent and diagonalizable has finite order.

We begin with an observation about a group consisting entirely of unipotent matrices: the integral Heisenberg group $H$, defined as the subgroup of $\mathrm{GL}(3, \mathbb{R})$ consisting of the upper unitriangular integer matrices. One might ask if $H$ can be realized as a subgroup of $\mathrm{GL}(n, \mathbb{F})$ for some (possibly different) $n$ and $\mathbb{F}$ that consists entirely of diagonalizable matrices. The following remark contains an elementary argument that this is impossible. See the work of Button for a proof of a more general result \cite[Theorem~3.2]{button2019aspects}, and for a broader discussion on unipotent matrices in matrix groups.

\begin{remark}\label{heisenberg} Let
\[
x = \begin{pmatrix} 1 & 1 & 0 \\ 0 & 1 & 0 \\ 0 & 0 & 1 \end{pmatrix}, \quad y = \begin{pmatrix} 1 & 0 & 0 \\ 0 & 1 & 1 \\ 0 & 0 & 1 \end{pmatrix}, \quad z = \begin{pmatrix} 1 & 0 & 1 \\ 0 & 1 & 0 \\ 0 & 0 & 1 \end{pmatrix}
\]
and let $\rho: H \rightarrow \mathrm{GL}(n, \mathbb{F})$ be any representation. Up to replacing $\mathbb{F}$ with its algebraic closure and postconjugating $\rho$, we may assume that $\rho(z)$ has a block-diagonal structure
\[
\rho(z) = \mathrm{diag}(Z_1, \ldots, Z_k)
\]
where $Z_r \in \mathrm{GL}(n_r, \mathbb{F})$ is upper triangular with a unique eigenvalue $\lambda_r \in \mathbb{F}^*$, and that $\lambda_1, \ldots, \lambda_k$ are distinct. Since $\rho(x), \rho(y)$ commute with $\rho(z)$, each of the former preserves the generalized eigenspaces of $\rho(z)$ and thus has a block-diagonal structure
\begin{alignat*}{4}
\rho(x) &= \mathrm{diag}(X_1, \ldots, X_k) \\
\rho(y) &= \mathrm{diag}(Y_1, \ldots, Y_k)
\end{alignat*}
where $X_r, Y_r \in \mathrm{GL}(n_r, \mathbb{F})$. Since $z = [x,y]$, we have $Z_r = [X_r, Y_r]$, and so
\[
\lambda_r^{n_r} = \det Z_r = 1
\]
for $r = 1, \ldots, k$. We conclude that $\rho(z)$ is a virtually unipotent matrix. Since $z$ has infinite order (as does any nontrivial unipotent matrix with entries in a field of characteristic zero), it follows that $\rho(z)$ cannot be diagonalizable if $\rho$ is to be faithful.
\end{remark}

Remark \ref{heisenberg} motivates the following definition.

\begin{definition}\label{virtuallyunipotent}
An element $\gamma$ of an arbitrary group $\Gamma$ is {\it virtually unipotent} if any finite-dimensional linear representation of $\Gamma$ maps $\gamma$ to a virtually unipotent matrix. 
\end{definition}

\begin{remark}
If $\Gamma$ is a residually finite group, as are many groups of interest and, in particular, as is the fundamental group of any closed 3-manifold \cite{hempel1987residual}, then for any nontrivial element $\gamma \in \Gamma$, there is a finite-dimensional unitary representation $\rho$ of $\Gamma$ such that $\rho(\gamma)$ is nontrivial and hence, by diagonalizability of unitary matrices, not unipotent. Thus, for our purposes, it is not sensible to omit the word ``virtually" in Definition \ref{virtuallyunipotent}.
\end{remark}

\begin{remark}\label{cyclic}
If $\gamma$ is a virtually unipotent element of a group $\Gamma$, then any element in the conjugacy class of $\gamma$ is virtually unipotent in $\Gamma$. Moreover, if $\Gamma_0$ is an abelian subgroup of $\Gamma$ generated by virtually unipotent elements of $\Gamma$, then any element of $\Gamma_0$ is virtually unipotent in $\Gamma$. The latter follows from the fact that an abelian subgroup of $\mathrm{GL}(n, \mathbb{F})$, where $\mathbb{F}$ is an algebraically closed field, is conjugate to an upper triangular subgroup of $\mathrm{GL}(n, \mathbb{F})$ \cite[Theorem~1.1.5]{radjavi2012simultaneous}.
\end{remark}

\begin{remark}\label{commensurable}
Suppose $\Gamma_0$ is a finite-index normal subgroup of a group $\Gamma$, and that $\gamma$ is a virtually unipotent element of $\Gamma$. Then a generator $\gamma_0$ of $\langle \gamma \rangle \cap \Gamma_0$ is a virtually unipotent element of $\Gamma_0$. Indeed, let $\rho_0$ be a finite-dimensional linear representation of $\Gamma_0$. Then $\rho_0$ is a direct summand of the restriction $\rho \bigr|_{\Gamma_0}$, where $\rho$ is the representation induced by $\rho_0$ on $\Gamma$. Since $\rho(\gamma_0)$ is a virtually unipotent matrix, it follows that the same is true for $\rho_0(\gamma_0)$.
\end{remark}

\begin{remark}\label{distorted}
Lubotzky–Mozes–Raghunathan \cite[Prop.~2.4]{lubotzky2000word} showed that an element generating a distorted cyclic subgroup of a finitely generated group is virtually unipotent.
\end{remark}

Note that a finite-order element of any group is virtually unipotent. From Remark \ref{heisenberg} (or Remark \ref{distorted}), one observes that the integer Heisenberg group $H$, viewed as an abstract group, contains an {\it infinite-order} virtually unipotent element (namely, a generator of the center of $H$), and hence by Remark \ref{commensurable} so does the fundamental group of any closed 3-manifold with Nil geometry. In fact, the argument in Remark \ref{heisenberg} shows that if an element $\gamma$ of a group $\Gamma$ is a product of commutators of elements $\gamma_i \in \Gamma$ such that $\gamma$ commutes with the $\gamma_i$, then $\gamma$ is a virtually unipotent element of $\Gamma$. Thus, for example, an element of $\pi_1(M)$ representing a Seifert fiber of a closed 3-manifold $M$ with $\widetilde{\mathrm{SL}(2, \mathbb{R})}$ geometry is virtually unipotent in $\pi_1(M)$. 

A manifold is said to be {\it nonpositively curved (NPC)} if it admits a Riemannian metric of everywhere nonpositive sectional curvature. Closed 3-manifolds locally modeled on Nil or $\widetilde{\mathrm{SL}(2, \mathbb{R})}$ are not NPC \cite{gromoll1971some, yau1971fundamental, eberlein1982canonical}.
The purpose of this article is to exhibit nontrivial virtually unipotent elements within fundamental groups of non-NPC 3-manifolds of a different nature. 

\begin{theorem}\label{main}
Let $M$ be a connected closed orientable irreducible 3-manifold containing exactly one JSJ torus, and each of whose JSJ blocks is a product of $S^1$ with a 
surface. If $M$ is not NPC, then $\pi_1(M)$ contains a nontrivial virtually unipotent element. 
\end{theorem}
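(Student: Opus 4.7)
The plan is to use van Kampen to decompose $\pi_1(M)$, exploit the product structure of each JSJ block, and translate the non-NPC hypothesis into a condition on the gluing map that yields an essential curve on $T$ whose class satisfies the hypotheses of the argument in Remark~\ref{heisenberg}.

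First, I would write $\pi_1(M) = \pi_1(M_1) *_{\pi_1(T)} \pi_1(M_2)$ when $T$ separates, and as an HNN extension of a single block's fundamental group when $T$ does not separate. Since $M$ is closed with $T$ as its only JSJ torus, each block $\Sigma_i \times S^1$ has $\Sigma_i$ with exactly one boundary component (separating case); hence $\pi_1(M_i) = \pi_1(\Sigma_i) \times \langle f_i \rangle$, with the Seifert fiber $f_i$ central, and the boundary class $s_i$ admits the explicit commutator expression $s_i = [a_1^i, b_1^i] \cdots [a_{g_i}^i, b_{g_i}^i]$ in $\pi_1(\Sigma_i)$. Note that $f_i$ commutes with every $a_k^i, b_k^i$, but $s_i$ itself does not, so neither identity, applied naively inside one block, exhibits $s_i$ or a power of $f_i$ as a product of commutators of elements commuting with it: we need to use both sides of $T$.

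The second step, and the main obstacle, is to bring in the non-NPC hypothesis. Existence of an NPC Riemannian metric for graph manifolds whose JSJ blocks are all products $\Sigma \times S^1$ is characterized (following Leeb and Buyalo--Svetlov) by a combinatorial condition on the gluing map $\phi \in \mathrm{GL}(2, \mathbb{Z})$ relative to the $(s_i, f_i)$-bases of $\pi_1(T)$. The failure of this condition should single out an essential curve $c$ on $T$, distinguished by the relative configuration of the slopes $f_1, f_2, s_1, s_2$; the claim to establish is that the class $\gamma \in \pi_1(T) \subset \pi_1(M)$ of $c$ can be written as a product of commutators $\gamma = \prod_j [g_j, h_j]$ in $\pi_1(M)$ with each $g_j, h_j$ commuting with $\gamma$. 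The expected construction is to start from the commutator factors $[a_k^i, b_k^i]$ that express $s_i$ on one side and modify each $a_k^i$ or $b_k^i$ by central elements drawn from the opposite block; the restricted form of $\phi$ forced by non-NPC is precisely what permits such modifications to make the resulting $g_j, h_j$ commute with $\gamma$ in all of $\pi_1(M)$ while preserving the commutator identity. A complementary line, using Remark~\ref{distorted}, would instead show that non-NPC forces some cyclic subgroup of $\pi_1(T)$ to be distorted in $\pi_1(M)$ and invoke Lubotzky--Mozes--Raghunathan.

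With $\gamma$ and its commutator expression in hand, the argument in Remark~\ref{heisenberg} applies verbatim: for any finite-dimensional representation $\rho$ of $\pi_1(M)$, each $\rho(g_j)$ and $\rho(h_j)$ preserves the generalized eigenspaces of $\rho(\gamma)$, so on each such eigenspace $\rho(\gamma)$ is a product of commutators in a general linear group, hence has determinant $1$, forcing its unique eigenvalue there to be a root of unity. Thus $\gamma$ is a nontrivial virtually unipotent element of $\pi_1(M)$ represented by an essential curve on $T$.
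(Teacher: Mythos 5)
The central step of your proposal---that the non-NPC condition yields an essential curve $\gamma$ on $T$ admitting an expression $\gamma=\prod_j[g_j,h_j]$ in $\pi_1(M)$ with each $g_j,h_j$ commuting with $\gamma$---is asserted but never constructed, and the sketch you give for producing it does not work. The fiber $f_2$ of the opposite block is central only in $\pi_1(M_2)$, not in $\pi_1(M)$; in particular it does not commute with the generators $a_k^1,b_k^1$ of $\pi_1(\Sigma_1)$, so replacing $a_k^1$ by $a_k^1 f_2^{m}$ neither preserves the commutator identity $s_1=\prod_k[a_k^1,b_k^1]$ nor produces factors commuting with the target curve. Moreover, your fallback via Remark~\ref{distorted} is explicitly closed off: apart from the Anosov torus bundle, every cyclic subgroup of $\pi_1(M)$ for $M$ as in Theorem~\ref{main} is undistorted (Kapovich--Leeb), so Lubotzky--Mozes--Raghunathan gives nothing. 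Finally, your outline only treats the separating (edge) case; the non-separating (loop) case, where a single block is glued to itself and $\pi_1(M)$ is an HNN extension, is not addressed at all, and there the relevant virtually unipotent element is $f^{a-1}z^b$, not a boundary curve of a block.

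The paper's actual argument does not try to exhibit the target element as a product of commutators of centralizing elements. Instead, given an arbitrary representation $\rho$, it uses Lemma~\ref{triangular} to simultaneously block-triangularize $\rho(f)$, $\rho(f')$, $\rho(z)$ according to generalized eigenspaces, reads off multiplicative constraints among the block eigenvalues $\lambda_r$, $\lambda'_s$, $\mu_{r,s}$ from the gluing relations~\ref{firstrowedge}--\ref{secondrowedge} (or \ref{firstrowloop}--\ref{secondrowloop}) and from $\det(\prod[X_r^{(i)},Y_r^{(i)}])=1$, passes to the torsion-free quotient of $\mathbb{F}^*$, and then via Lemma~\ref{abelian} reduces to a purely integer linear system weighted by the multiplicities $n_{r,s}$. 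The non-NPC condition of Theorem~\ref{npc} ($a=d$ or $|a-d|\le 1$ in the loop case; $a$ or $d$ nonzero in the edge case) is what makes the resulting inequalities (a sum of nonnegative quadratics in the edge case; a strict inequality contradicting indecomposability in the loop case) force the relevant eigenvalue combination to vanish. The determinant-of-commutators observation from Remark~\ref{heisenberg} enters only as one ingredient among several, applied blockwise, and does not suffice on its own.
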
 

We use a necessary and sufficient condition (Theorem \ref{npc}) for such 3-manifolds $M$ to be NPC due to Buyalo–Kobel'skii \cite{buyalo1995geometrization}, and independently Kapovich–Leeb \cite{kapovich1996actions} in the case that $M$ has two JSJ blocks. Our argument is similar to Button's proof that Gersten's free-by-cyclic group contains a nontrivial virtually unipotent element \cite[Theorem~4.5]{button2017free}. We remark that if $M$ is a 3-manifold as in the statement of Theorem \ref{main} that is not the mapping torus of an Anosov homeomorphism of the 2-torus, then it follows from \cite{kapovich19983} that all cyclic subgroups of $\pi_1(M)$ are undistorted.

An example of a 3-manifold $M$ as in the statement of Theorem \ref{main} is the mapping torus of a Dehn twist about an essential simple closed curve on a closed orientable surface of genus at least 2 \cite[Theorem~3.7]{kapovich1996actions}. In this case, our proof in fact shows that an element of $\pi_1(M)$ representing that curve is virtually unipotent.

Since a 3-manifold $M$ as in the statement of Theorem \ref{main} is aspherical, any nontrivial element of $\pi_1(M)$ has infinite order. We are interested in infinite-order virtually unipotent elements, since the existence of such an element within a group has interesting representation theoretic consequences for the group. The following definition is due to Button \cite[Definition~2.2]{button2017free}.

\begin{definition} 
A group $\Gamma$ is {\it NIU-linear} if there is a faithful finite-dimensional linear representation $\rho$ of $\Gamma$ such that $\rho(\Gamma)$ does not contain unipotent matrices of infinite order.
\end{definition}

Since unitary matrices are diagonalizable, any group admitting a faithful finite-dimensional unitary representation is NIU-linear (in fact, the image of such a representation will contain no nontrivial unipotent matrices). Furthermore, if $\mathbb{F}$ is a field of positive characteristic, then every unipotent element of $\mathbb{F}$ has finite order \cite[Proposition~2.1]{button2017free}, and so any group admitting a faithful finite-dimensional linear representation over such $\mathbb{F}$ is NIU-linear. Since a group containing an infinite-order virtually unipotent element is evidently not NIU-linear, such a group neither admits a faithful finite-dimensional unitary representation, nor a faithful finite-dimensional representation over a field of positive characteristic.

As a consequence of Theorem \ref{main} and the work of several others, we obtain the following corollary.

\begin{corollary}\label{cor}
Let $M$ be as in the statement of Theorem \ref{main} and let $\Gamma = \pi_1(M)$. Then the following are equivalent:
\begin{enumerate}[label=(\roman*)]
\item $M$ is NPC; \label{misnpc}
\item $\Gamma$ virtually embeds in a finitely generated right-angled Artin group; \label{raag}
\item $\Gamma$ admits a faithful finite-dimensional unitary representation; \label{unitary}
\item $\Gamma$ admits a faithful finite-dimensional linear representation over a field of positive characteristic; \label{positivechar}
\item $\Gamma$ is NIU-linear; \label{niu}
\item $\Gamma$ does not contain a nontrivial virtually unipotent element. \label{vue}
\end{enumerate}
\end{corollary}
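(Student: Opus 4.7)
The plan is to close a cycle of implications
\[
\mathrm{(i)} \Rightarrow \mathrm{(ii)} \Rightarrow \mathrm{(iii)},\ \mathrm{(iv)} \Rightarrow \mathrm{(v)} \Rightarrow \mathrm{(vi)} \Rightarrow \mathrm{(i)},
\]
appealing to the linearity theory of NPC graph-manifold groups for the forward steps, the observations already recorded on unitary and positive-characteristic representations for the middle steps, and Theorem~\ref{main} itself for the return. Concretely, (vi)$\Rightarrow$(i) is precisely the contrapositive of Theorem~\ref{main}. For (v)$\Rightarrow$(vi), let $\rho\colon\Gamma\hookrightarrow\mathrm{GL}(N,\mathbb{F})$ be a faithful NIU representation and $\gamma\in\Gamma$ a virtually unipotent element; then $\rho(\gamma)^k$ is unipotent for some $k\ge 1$, hence has finite order by the NIU hypothesis, so some power of $\gamma$ is trivial, and asphericity of $M$ (hence torsion-freeness of $\Gamma$) forces $\gamma=1$. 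The implications (iii)$\Rightarrow$(v) and (iv)$\Rightarrow$(v) are precisely the observations recorded in the paragraph preceding the corollary.

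For (i)$\Rightarrow$(ii) I would invoke the theorem of Liu, extended to all graph manifolds by Przytycki--Wise, asserting that the fundamental group of an NPC graph manifold is virtually compact special, combined with the Haglund--Wise characterization of virtually compact special groups as precisely those that virtually embed in a finitely generated right-angled Artin group.

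For (ii)$\Rightarrow$(iv), I would use that every right-angled Artin group $A_\Lambda$ admits a faithful finite-dimensional linear representation over a positive-characteristic field (for example, a Magnus-style representation adapted to $\mathbb{F}_p[t_1,\ldots,t_k]$, with generators sent to matrices whose eigenvalues are transcendentals, so that faithfulness survives passage to the fraction field). If $\Gamma_0\le\Gamma$ has finite index and embeds in $A_\Lambda$, inducing the resulting representation of $\Gamma_0$ up to $\Gamma$ then yields a faithful finite-dimensional representation of $\Gamma$ over the same positive-characteristic field. The implication (ii)$\Rightarrow$(iii) would proceed in the same manner, provided that $A_\Lambda$ (or at least the finite-index subgroup $\Gamma_0$) admits a faithful finite-dimensional unitary representation.

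The principal obstacle I foresee is exactly the unitary clause (iii): integral linearity of RAAGs does not upgrade automatically to unitarity, since natural faithful integral representations of $A_\Lambda$ send Artin generators to infinite-order unipotent matrices, which cannot appear in a unitary group. I would either invoke a known faithful finite-dimensional unitary representation of $A_\Lambda$ (or of $\Gamma_0$) to close this step, or else bypass (ii) altogether and derive (iii) directly from (i), leveraging the NPC Riemannian structure on $M$: each Seifert block $\Sigma_i\times S^1$ admits plentiful faithful unitary representations coming from unitary representations of the surface group factor together with circle characters, and under the NPC hypothesis on the gluing one should be able to choose these compatibly across the JSJ torus so as to produce a faithful unitary representation of $\pi_1(M)$.
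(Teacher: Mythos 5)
Your chain of implications is structurally identical to the paper's: $\mathrm{(vi)}\Rightarrow\mathrm{(i)}$ is the contrapositive of Theorem~\ref{main}; $\mathrm{(v)}\Rightarrow\mathrm{(vi)}$, $\mathrm{(iii)}\Rightarrow\mathrm{(v)}$, and $\mathrm{(iv)}\Rightarrow\mathrm{(v)}$ are handled exactly as you describe (your argument for $\mathrm{(v)}\Rightarrow\mathrm{(vi)}$ using torsion-freeness of $\Gamma$ via asphericity is a correct unpacking of what the paper calls ``evident''); and $\mathrm{(i)}\Rightarrow\mathrm{(ii)}$ is Liu's theorem on NPC graph manifolds (Przytycki--Wise is only needed for mixed 3-manifolds, so Liu alone suffices here).

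The genuine gap is precisely the one you flag: $\mathrm{(ii)}\Rightarrow\mathrm{(iii)}$. The paper closes this by citing a result of Agol (reference \cite{315430}, a MathOverflow answer) that any finitely generated right-angled Artin group embeds in $\mathrm{U}(n)$ for some $n$; one then induces a faithful unitary representation from the finite-index subgroup $\Gamma_0$ up to $\Gamma$, and induction preserves both faithfulness and unitarity. Your proposed fallback --- building a faithful unitary representation directly from the NPC Riemannian structure by matching unitary representations of the blocks across the JSJ torus --- is not a viable route: the compatibility constraints at the torus are exactly the obstruction that the virtual-RAAG-embedding machinery (Wise, Agol, Liu) was developed to circumvent, and there is no direct geometric construction available. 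You should simply invoke Agol's result. Similarly, for $\mathrm{(ii)}\Rightarrow\mathrm{(iv)}$ the paper cites Berlai--de la Nuez Gonz\'alez \cite{berlai2019linearity} rather than giving a construction; your Magnus-style sketch has the right instinct (one must avoid unipotent images of the Artin generators, since unipotents have finite order in characteristic $p$), but verifying that such a modified representation respects the RAAG commutation relations and is faithful is nontrivial, and is the content of the cited reference.
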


We explain how Corollary \ref{cor} can be established using Theorem \ref{main}. For us, a {\it graph manifold} is a connected closed orientable irreducible non-Seifert 3-manifold all of whose JSJ blocks are Seifert. The manifolds described in the statement of Theorem \ref{main} can be thought of as the simplest examples of graph manifolds. 
That \ref{misnpc} implies \ref{raag} in Corollary \ref{cor} is due to Liu \cite{liu2013virtual}, who showed that the fundamental group of any NPC graph manifold is virtually a subgroup of a finitely generated right-angled Artin group (RAAG). Agol \cite{315430} showed that such a RAAG embeds in $\mathrm{U}(n)$ for some $n$, so that \ref{raag} implies \ref{unitary}. Moreover, the work of Berlai–de la Nuez Gonz\'alez \cite{berlai2019linearity} implies that a finitely generated RAAG admits a faithful finite-dimensional linear representation over a field of positive characteristic (indeed, {\it any} prime characteristic), and so \ref{raag} also implies \ref{positivechar}. It was discussed before the statement of Corollary \ref{cor} that each of \ref{unitary} and \ref{positivechar} imply \ref{niu}, and that \ref{niu} implies \ref{vue}. Finally, the fact that \ref{vue} implies \ref{misnpc} is precisely the statement of Theorem \ref{main}.

\begin{remark}
Note that if $\gamma$ is an infinite-order virtually unipotent element of a group $\Gamma$, then $\langle \gamma \rangle \subset \Gamma$ is not a virtual retract of $\Gamma$ by Remark \ref{commensurable}. Thus, the fact that statement \ref{raag} in Corollary \ref{cor} implies statement \ref{vue} also follows from work of Minasyan \cite{10.1093/imrn/rnz249}.
\end{remark}

\begin{remark}
It was already known that statement \ref{positivechar} in Corollary \ref{cor} does not hold for the fundamental group $\Gamma$ of any non-NPC graph manifold. Indeed, Button \cite{button2019aspects} proved that any finitely generated group $\Gamma$ satisfying \ref{positivechar} acts properly by semisimple isometries on a complete CAT(0) metric space, and Leeb \cite{leeb1992manifolds} showed that if the fundamental group of a graph manifold $M$ admits such an action, then $M$ is NPC. At the time of writing of this article, it is not known if a single non-NPC graph manifold without Sol geometry admits a faithful finite-dimensional linear representation over a field of characteristic zero.
\end{remark}

\begin{remark}
It follows from the Lie–Kolchin–Mal'cev theorem that if $\Gamma$ is a polycyclic group then $\Gamma$ is NIU-linear if and only if $\Gamma$ is virtually abelian \cite{button2017properties}. Thus, for example, closed 3-manifolds with Nil or Sol geometry do not have NIU-linear fundamental groups. However, this handy obstruction to NIU-linearity is not useful for dealing with most 3-manifolds $M$ as in the statement of Theorem \ref{main} since, for all such $M$ apart from the mapping torus of an Anosov homeomorphism of the 2-torus, any subgroup of $\pi_1(M)$ lacking a nonabelian free subgroup is in fact abelian \cite{frigerio2011rigidity}.
\end{remark}

Statements \ref{misnpc}-\ref{niu} in Corollary \ref{cor} were known to be equivalent for $M$ a closed aspherical geometric 3-manifold and $\Gamma = \pi_1(M)$ \cite{button2017free}. It is also true that \ref{vue} is equivalent to \ref{misnpc}-\ref{niu} in this case. Indeed, to argue this, it suffices to show that if $M$ is a closed aspherical geometric non-NPC 3-manifold, that is, if $M$ is a closed 3-manifold with Nil, $\widetilde{\mathrm{SL}(2, \mathbb{R})}$, or Sol geometry, then $\pi_1(M)$ contains a nontrivial virtually unipotent element. This was already justified for the first two geometries, and closed Sol 3-manifolds are handled by Theorem \ref{main}. 

We conjecture the following.

\begin{conjecture}\label{conj}
Statements \ref{misnpc}-\ref{vue} in Corollary \ref{cor} are equivalent for any closed aspherical 3-manifold $M$ and $\Gamma = \pi_1(M)$.
\end{conjecture}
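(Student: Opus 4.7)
The plan is to reduce, via geometrization, to the case of graph manifolds and then extend Theorem \ref{main}. The only implication among \ref{misnpc}--\ref{vue} not already addressed in the paper's discussion is \ref{vue} $\Rightarrow$ \ref{misnpc}, so it suffices to produce a nontrivial virtually unipotent element of $\pi_1(M)$ whenever $M$ is a closed aspherical non-NPC 3-manifold. By geometrization and the JSJ decomposition, $M$ either has a hyperbolic JSJ block (and is thus NPC by Leeb \cite{leeb1992manifolds}), is closed Seifert (in which case the only non-NPC geometries are Nil and $\widetilde{\mathrm{SL}(2,\mathbb{R})}$, already handled explicitly in the paper via the Seifert fiber), or is a graph manifold in the sense adopted in this article. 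The remaining task is therefore to exhibit a nontrivial virtually unipotent element of $\pi_1(M)$ for every non-NPC graph manifold $M$.

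I would first reduce to graph manifolds all of whose JSJ blocks are of product form $\Sigma \times S^1$, by passing to a finite regular cover that simultaneously kills the exceptional fibers and Euler classes of the Seifert blocks (using residual finiteness of Fuchsian groups and the fact that a circle bundle over a compact orientable surface with boundary is automatically trivial), then invoking Remark \ref{commensurable} to push a virtually unipotent element in the cover back to $\pi_1(M)$. This reduction preserves the non-NPC property since NPC passes to finite covers. Next, I would apply the Buyalo--Kobel'skii criterion (Theorem \ref{npc}) to extract, from the failure of NPC, a specific closed cycle in the dual JSJ graph along which the gluing data is unbalanced. Along this cycle the plan is to build a Heisenberg-type relation as in Remark \ref{heisenberg}: inductively assemble elements $\alpha_i, \beta_i \in \pi_1(M)$, each commuting with a regular Seifert fiber in an adjacent block, so that the product $\prod_i [\alpha_i, \beta_i]$ represents a single torus element $\gamma$ that commutes with all $\alpha_i$ and $\beta_i$. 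The commutation relations are free of charge from the product Seifert structure on each block, where the fiber is central; it is the global identity $\gamma = \prod_i [\alpha_i, \beta_i]$ that must be engineered out of the failure of the NPC inequalities. Remark \ref{heisenberg} then forces the image of $\gamma$ to be virtually unipotent in every finite-dimensional linear representation of $\pi_1(M)$.

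The main obstacle is the inductive construction of $\gamma$ along arbitrary cycles in the JSJ graph, and in particular verifying that $\gamma$ is nontrivial. Already in the single-torus case of Theorem \ref{main}, nontriviality is the heart of the matter and is extracted from the precise form of the Buyalo--Kobel'skii obstruction; for longer cycles, the successive commutator relations, each valid only within one block, must be glued into a single global identity in $\pi_1(M)$, which is a genuinely new combinatorial challenge. A further subtlety is that passing to a finite cover can alter the dual JSJ graph (for instance by splitting or by making previously distinct tori parallel), so the reduction to product blocks must be done compatibly with the cycle along which the NPC obstruction is realized. It is quite possible that some configurations of the dual graph resist a direct Heisenberg-style argument and require a more flexible source of relations, such as the commutator-of-commutators extensions implicit in the linearity-over-positive-characteristic obstruction of Button \cite{button2019aspects}.
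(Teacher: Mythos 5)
You have proposed a proof of Conjecture~\ref{conj}, but this is precisely that: a conjecture. The paper does not prove it, and you should not expect to be able to match the proposal to a proof in the paper. What the paper does (immediately after the conjecture's statement) is reduce it, exactly as you do, to the claim that the fundamental group of every non-NPC graph manifold contains a nontrivial virtually unipotent element: a non-NPC closed aspherical 3-manifold is either Seifert with Nil or $\widetilde{\mathrm{SL}(2,\mathbb{R})}$ geometry (handled by the Heisenberg-type argument preceding Theorem~\ref{main}), or, up to passing to the orientation cover, a graph manifold, by Leeb. Your first paragraph is therefore a correct restatement of the paper's own reduction. Your reduction to product blocks via passing to a regular finite cover and invoking Remark~\ref{commensurable} is also standard and plausible, though one must be a bit careful that the cover used is the orientation cover first (Leeb's theorem is for orientable manifolds) and that the NPC condition is indeed a commensurability invariant, as you note.

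The genuine gap is in the second and third paragraphs. Your plan is to realize a boundary-torus class $\gamma$ as a product of commutators $\prod_i[\alpha_i,\beta_i]$ with $\gamma$ commuting with all $\alpha_i,\beta_i$, and then invoke the Heisenberg-type observation (stated in the paper just before Theorem~\ref{main}). But this is not how the paper proves the cases it does prove, and there is good reason to doubt it can work even there. In the edge case with gluing matrix $\begin{pmatrix}1&b\\0&1\end{pmatrix}$ (the Dehn twist mapping torus), the paper shows $z$ is virtually unipotent, where $z=\prod_i[x_i,y_i]$ is the relation in one block. But $z$ does not commute with the $x_i,y_i$: those satisfy a genus-$g$ surface-with-boundary relation, not Heisenberg relations. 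The paper's proof instead proceeds by a delicate simultaneous block-triangularization (Lemma~\ref{triangular}), an eigenvalue bookkeeping reduced to integer linear systems via Lemma~\ref{abelian}, and a positivity argument exploiting the exact form of the Buyalo--Kobel'skii obstruction in each case. Nothing in that argument produces, or relies on, a global commutator identity of the kind your sketch needs. So the ``main obstacle'' you identify is not merely a combinatorial nuisance of longer cycles; it is already present for a single torus and is circumvented there by a different method entirely. Your proposal does not constitute a proof, as you yourself anticipate, and the reason it does not is that the Heisenberg identity you hope to engineer is not available.
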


By work of Agol \cite{agol2013virtual} and Przytycki–Wise \cite{przytycki2018mixed}, and the aforementioned work of Liu \cite{liu2013virtual}, the fundamental group $\Gamma$ of any closed NPC 3-manifold satisfies \ref{raag} and hence \ref{unitary}-\ref{vue} in Corollary \ref{cor}. Furthermore, a closed aspherical non-geometric non-NPC 3-manifold is (up to passing to its orientation cover) a graph manifold \cite{leeb1992manifolds}. Thus, Conjecture \ref{conj} amounts to the claim that the fundamental group of any non-NPC graph manifold contains a nontrivial virtually unipotent element.

\subsection*{Organization} In Section \ref{preliminaries}, we fix the language in which we prove Theorem \ref{main}, and also present two lemmas that will be useful in the proof. The proof of Theorem \ref{main} is contained in Section \ref{proofofmain} and is divided into two cases: the case that there is a single block in the JSJ decomposition of the 3-manifold $M$ (Theorem \ref{looptheorem}), and the case that there are two (Theorem \ref{edgetheorem}). The two proofs are very similar and are somewhat technical, but involve only elementary linear algebra.

\subsection*{Acknowledgements} I am grateful to my supervisor Piotr Przytycki for his encouragement and guidance, and in particular for suggesting elegant techniques for taming large systems of equations. I also thank Jack Button for helpful discussions.

\section{Preliminaries}\label{preliminaries}

\subsection{Definitions}

If $S$ is a (not necessarily connected) closed surface embedded in a 3-manifold $M$, we denote by~$M \bigr| S$ the complement in $M$ of a small open tubular neighborhood of $S$. If $M$ is a connected closed orientable irreducible 3-manifold, then there is, up to isotopy, a unique minimal collection $\mathcal{E}$ of disjoint embedded incompressible tori such that each component of $M \bigr| \bigcup \mathcal{E}$ is either Seifert or atoroidal (see, for example, \cite[Thm~1.41]{kapovich2009hyperbolic} and the references therein). The decomposition of $M$ into the components of $M \bigr| \bigcup \mathcal{E}$ is called the {\it Jaco–Shalen–Johannson (JSJ) decomposition} of $M$. If $\mathcal{E} = \emptyset$, we say $M$ has {\it trivial} JSJ decomposition. Note that if $M$ is the mapping torus of an Anosov homeomorphism of the 2-torus, then $M$ has nontrivial JSJ decomposition.

Let $\mathfrak{G}$ denote the class of all connected closed orientable irreducible non-Seifert 3-manifolds~$M$ such that each component $M_v$ of $M \bigr| \bigcup \mathcal{E}$, where $\mathcal{E}$ is the collection of JSJ tori in $M$, is a trivial $S^1$-bundle over a compact orientable surface $\Sigma_v$ with boundary. The manifolds $M_v$ are the {\it blocks} of $M$. 
The {\it underlying graph} $\mathcal{G} = \mathcal{G}(M)$ of $M$ is the graph dual to the JSJ decomposition of $M$; the graph $\mathcal{G}$ is well-defined since the collection $\mathcal{E}$ is unique up to isotopy. We identify the vertex set $\mathcal{V}$ of $\mathcal{G}$ with the set of blocks of $M$, and the set of unoriented edges of $\mathcal{G}$ with $\mathcal{E}$. Denote by~$\mathcal{W}$ the set of oriented edges of $\mathcal{G}$. We identify $\mathcal{W}$ with the set of boundary components of $M \bigr| \bigcup \mathcal{E}$ by assigning to each oriented edge $w \in \partial v \subset \mathcal{W}$ the corresponding boundary component $T_w$ of $M_v$. 

Choose an orientation of $M$, thereby inducing an orientation on each block $M_v$ of $M$, and hence on each component of $\partial M_v$. For each $v \in \mathcal{V}$, choose an orientation of the fibers in $M_v$, as well as a {\it Waldhausen basis} for $H_1(\partial M_v ; \mathbb{Z})$; that is, a basis $\{(f_w, z_w) \> | \> w \in \partial v\}$ for $H_1(\partial M_v ; \mathbb{Z}) = \bigoplus_{w \in \partial v} H_1(T_w ; \mathbb{Z})$ such that the elements $f_w$ represent oriented fibers,  the algebraic intersection number $\hat{i}(z_w, f_w)$ on $T_w$ is $+1$, and the sum $\oplus_{w \in \partial v}z_w$ lies in the kernel of the map $H_1(\partial M_v ; \mathbb{Z}) \rightarrow~H_1(M_v ; \mathbb{Z})$ induced by inclusion. We call the additional structure on $M$ given by the choices made in this paragraph a {\it framing} of $M$.

An oriented edge $w \in \mathcal{W}$ corresponds to a gluing homeomorphism $T_{-w} \rightarrow T_w$, which induces an isomorphism $\phi_w: H_1(T_{-w}; \mathbb{Z}) \rightarrow H_1(T_w; \mathbb{Z})$. Define $B_w = \begin{pmatrix} a_w & b_w \\ c_w & d_w \end{pmatrix} \in \mathrm{GL}(2,\mathbb{Z})$ to be the matrix whose entries satisfy
\begin{alignat*}{4}
\phi_w(f_{-w}) &= a_w f_w + b_w z_w \\
\phi_w(z_{-w}) &= c_w f_w + d_w z_w
\end{alignat*}
Note that $\det B_w = -1$ since $M$ is orientable, that $B_{-w} = B_w^{-1}$, and that $b_w \neq 0$ by minimality of $\mathcal{E}$.

This article is concerned with the subclasses $\mathfrak{E}, \mathfrak{L}$ of $\mathfrak{G}$ consisting of all manifolds $M$ in $\mathfrak{G}$ whose underlying graph is a single edge (joining distinct vertices) or a loop, respectively. We call~$B \in \mathrm{GL}(2, \mathbb{Z})$ a {\it gluing matrix} for such a manifold $M$ if $B = B_w$ for an oriented edge $w$ of~$\mathcal{G}(M)$ with respect to some framing of $M$. 

The fundamental group $\pi_1(M)$ of a manifold $M \in \mathfrak{E}$ with gluing matrix $B = \begin{pmatrix} a & b \\ c & d \end{pmatrix}$ and whose blocks $M_v, M_{v'}$ have base surfaces $\Sigma, \Sigma'$ of genus $g, g'$, respectively, is isomorphic to the group $\Gamma_{g,g',B}^\mathfrak{E}$ given by the presentation with generators 
\begin{alignat*}{4}
 & x_1, y_1, \ldots, x_g, y_g, z, f, \\ & x'_1, y'_1, \ldots, x'_{g'}, y'_{g'}, z', f'
 \end{alignat*}
subject to the relations
\begin{enumerate}[label=(\Roman*)]
\item $z = \prod_{i=1}^g [x_i, y_i]$, \label{block1productofcommutators}
\item $[x_i, f] = [y_i, f] = 1$ for $i = 1, \ldots, g$, \label{block1commuting}
\item $z' = \prod_{i=1}^{g'}[x'_i, y'_i]$, 
\item $[x'_i, f'] = [y'_i, f'] = 1$ for $i = 1, \ldots, g'$, \label{block2commuting}
\item $f' = f^az^b$, \label{firstrowedge}
\item $z' = f^cz^d$, \label{secondrowedge}
\end{enumerate}
where the subgroup $\langle x_1, y_1, \ldots, x_g, y_g \rangle$ (resp., $\langle x'_1, y'_1, \ldots, x'_g, y'_g \rangle$) is the image of the map $\pi_1(\Sigma) \rightarrow\pi_1(M)$ (resp., $\pi_1(\Sigma') \rightarrow \pi_1(M)$) induced by the inclusions $\Sigma \subset M_v \subset M$ (resp., $\Sigma' \subset M_{v'} \subset M$), and the element $f$ (resp., $f'$) represents an oriented fiber of $M_v$ (resp., $M_{v'}$). 
\begin{remark}\label{manipulatematrix}
Note that if $C$ is obtained from $B$ by negating a row or a column of $B$, then $\Gamma_{g,g',B}^\mathfrak{E} \cong \Gamma_{g,g',C}^\mathfrak{E}$. Note also that $\Gamma_{g,g',B}^\mathfrak{E} \cong \Gamma_{g,g',B^{-1}}^\mathfrak{E}$.
\end{remark}
The fundamental group $\pi_1(M)$ of a manifold $M \in \mathfrak{L}$ with gluing matrix $B = \begin{pmatrix} a & b \\ c & d \end{pmatrix}$ and the base surface $\Sigma$ of whose unique block $M_v$ has genus $g$ is isomorphic to the group $\Gamma_{g, B}^\mathfrak{L}$ given by the presentation with generators \[ x_1, y_1, \ldots, x_g, y_g, z, z', f, t \] subject to the relations
\begin{enumerate}[label=(\arabic*)]
\item $zz' = \prod_{i=1}^g [x_i, y_i]$, \label{productofcommutatorsloop}
\item $[x_i, f] = [y_i, f] = [z, f ] = 1$ for $i = 1, \ldots, g$, \label{commutingloop}
\item $tft^{-1} = f^a z^b$, \label{firstrowloop}
\item $tz't^{-1} = f^c z^d$, \label{secondrowloop}
\end{enumerate}
where the subgroup $\langle x_1, y_1, \ldots, x_g, y_g, z \rangle$ is the image of the map $\pi_1(\Sigma) \rightarrow \pi_1(M)$ induced by the inclusion $\Sigma \subset M_v \subset M$, and the element $f$ represents an oriented fiber of $M_v$.

\begin{remark}\label{inverse}
Note that $\Gamma_{g, B}^\mathfrak{L} \cong \Gamma_{g, B^{-1}}^\mathfrak{L}$.
\end{remark}

The following theorem is a special case of a result of Buyalo–Kobel'skii \cite{buyalo1995geometrization}, and was proved independently by Kapovich–Leeb  \cite{kapovich1996actions} in the case $M \in \mathfrak{E}$. 

\begin{theorem}\label{npc}
Let $M \in \mathfrak{E}$ (resp., $M \in \mathfrak{L}$) and let $B = \begin{pmatrix} a & b \\ c & d \end{pmatrix} \in \mathrm{GL}(2, \mathbb{Z})$ be a gluing matrix for $M$. Then $M$ is NPC if and only if $a=d=0$ (resp., if and only if $|a-d| \geq 2)$.
\end{theorem}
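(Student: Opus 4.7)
The plan is to establish each direction of the equivalences separately, using the Cartan--Hadamard theorem to reduce the NPC condition for $M$ to the CAT(0) condition on the universal cover $\widetilde{M}$.

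For sufficiency, the approach is explicit construction. I would equip each block $M_v \cong \Sigma_v \times S^1$ with a product metric: a hyperbolic metric on $\Sigma_v$ with totally geodesic boundary of prescribed lengths, and a round $S^1$-factor of prescribed circumference. Each boundary torus then becomes a rectangular flat torus whose sides represent the fiber direction and a boundary geodesic of $\Sigma_v$. The gluing along each JSJ torus is an isometry provided the side lengths on either side can be matched through the gluing matrix $B$. Once the metrics are consistent, nonpositive curvature reduces to the standard link condition at each JSJ torus; this in turn amounts to an inequality between the angles at which the fiber directions of the adjacent blocks meet the JSJ flat. The conditions $a = d = 0$ (edge case) and $|a - d| \geq 2$ (loop case) are precisely what permit the length-matching system to admit a solution compatible with those link inequalities.

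For necessity, I would assume $\widetilde{M}$ is CAT(0) and derive the arithmetic conditions as geometric obstructions. The flat torus theorem, applied to the $\mathbb{Z}^2$-subgroups associated to the JSJ tori, shows that the preimage of each JSJ torus in $\widetilde{M}$ is a $\pi_1(M)$-invariant union of totally geodesic flat planes separating the lifts of adjacent blocks. The fiber element of the block on either side of such a flat acts as a translation in a specific direction determined by the Seifert fibration. In the edge case, the CAT(0) angular constraint where the two blocks meet forces the fiber direction from one side to be parallel to the base direction of the other, which translates via relations \ref{firstrowedge}--\ref{secondrowedge} to $a = d = 0$. In the loop case, a parallel analysis measures the ``hyperbolicity'' of the self-gluing by $|a - d|$, and the CAT(0) constraints translate via relations \ref{firstrowloop}--\ref{secondrowloop} to $|a - d| \geq 2$.

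The main obstacle I anticipate is the sufficiency direction: simultaneously solving the system of length equations and verifying all link inequalities is a moderately intricate piecewise Riemannian construction, particularly in the loop case, where the equations must close up consistently around the loop and whose solvability is governed by the spectral behavior of $B$ rather than by the individual entries of $B$. The necessity direction is more conceptual but still requires care in extracting the precise algebraic translation of the CAT(0) angular constraint through the presentation generators.
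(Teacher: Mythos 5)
The paper does not prove this theorem; it is quoted as a special case of results of Buyalo--Kobel'skii~\cite{buyalo1995geometrization} and Kapovich--Leeb~\cite{kapovich1996actions}, so there is no ``paper's own proof'' to compare against. Evaluated on its own terms, your proposal correctly identifies the general framework used in those references: for sufficiency, one builds a piecewise Riemannian NPC metric with hyperbolic-times-flat product metrics on the blocks and totally geodesic flat JSJ tori, solving a system of length-matching constraints; for necessity, one uses the flat torus theorem together with Kapovich--Leeb's structure theory for CAT(0) spaces arising from graph manifolds.

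However, as written the proposal has a genuine gap: it never actually derives the arithmetic conditions $a=d=0$ and $|a-d|\ge 2$, which are the entire content of the theorem. Both directions of your argument defer the key step to assertions such as the conditions being ``precisely what permit the length-matching system to admit a solution'' or the CAT(0) constraint ``translating via relations \ref{firstrowedge}--\ref{secondrowedge} to $a=d=0$''---these restate the conclusion rather than prove it. Concretely, in the edge case you would need to set up coordinates where $f\perp z$ and $f'\perp z'$ on the glued flat torus, express $f'=af+bz$, $z'=cf+dz$, compute the inner product $\langle f',z'\rangle = ac\,|f|^2 + bd\,|z|^2$, and analyze when this (together with length and orientation constraints, and the block-side convexity requirements) can vanish for a choice of metric; only then does $a=d=0$ emerge, and one must also rule out the alternatives where $ac$ and $bd$ have opposite signs. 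The loop case requires an additional holonomy/consistency analysis around the cycle, and the passage from $|a-d|\geq 2$ (rather than $>2$) needs an explicit discussion of the boundary Seifert/Sol cases when $g=0$. Also, invoking a ``link condition'' is not quite the right tool for the smooth Riemannian construction here; the relevant verification is Gauss--Bonnet/convexity of the metric near the gluing locus, not Gromov's polyhedral link condition. In short, the outline points at the correct references, but the part of the argument that would actually produce the stated dichotomies is missing.
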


\subsection{Basic lemmas}

The following lemma will allow us to conjugate a representation $\rho$ of the appropriate group $\Gamma$ in a manner that makes the interactions between generalized eigenspaces of certain elements of~$\rho(\Gamma)$ more apparent.

\begin{lemma}\label{triangular}
Let $\mathbb{F}$ be an algebraically closed field, let $P, P', Q \in \mathrm{M}_{n\times n}(\mathbb{F})$, and let $\lambda_1, \ldots, \lambda_k$ (resp. $\lambda'_1, \ldots, \lambda'_\ell$) be the distinct eigenvalues of $P$ (resp. $P'$).  If $P,P',Q$ pairwise commute, then there is a single matrix $C \in \mathrm{GL}(n,\mathbb{F})$ such that
\begin{alignat*}{4}
CPC^{-1} &= \mathrm{diag}\left(P_{1,1}, \ldots, P_{1,\ell}, \ldots, P_{k,1}, \ldots, P_{k,\ell} \right) \\
CP'C^{-1} &= \mathrm{diag}\left(P'_{1,1}, \ldots, P'_{1,\ell}, \ldots, P'_{k,1}, \ldots, P'_{k,\ell} \right) \\
CQC^{-1} &= \mathrm{diag}\left(Q_{1,1}, \ldots, Q_{1,\ell}, \ldots, Q_{k,1}, \ldots, Q_{k,\ell} \right)
\end{alignat*}
where $P_{r,s}, P'_{r,s}, Q_{r,s}$ are (possibly empty) upper triangular matrices and the only eigenvalue of~$P_{r,s}$ (resp., $P'_{r,s}$) is $\lambda_r$ (resp., $\lambda'_s$). 
\end{lemma}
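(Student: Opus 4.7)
The plan is to first simultaneously block-diagonalize $P$ and $P'$ via a joint generalized eigenspace decomposition, and then apply a standard simultaneous triangularization theorem within each joint eigenspace.

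To set up, let $V = \mathbb{F}^n$ and decompose $V = \bigoplus_{r=1}^k V_r$, where $V_r$ is the generalized $\lambda_r$-eigenspace of $P$. Since $P'$ and $Q$ both commute with $P$, they preserve the decomposition into $P$-generalized eigenspaces, so each $V_r$ is invariant under $P'$ and $Q$. Next, applying the same reasoning to $P'|_{V_r}$, decompose $V_r = \bigoplus_{s=1}^\ell V_{r,s}$, where $V_{r,s}$ is the generalized $\lambda'_s$-eigenspace of $P'|_{V_r}$ (this space may be zero for some pairs $(r,s)$). Because $Q$ commutes with $P'$ and preserves $V_r$, it preserves each $V_{r,s}$. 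Assembled over all pairs $(r,s)$, the subspaces $V_{r,s}$ give a direct sum decomposition of $V$ that is simultaneously invariant under $P$, $P'$, and $Q$, with $P|_{V_{r,s}}$ having unique eigenvalue $\lambda_r$ and $P'|_{V_{r,s}}$ having unique eigenvalue $\lambda'_s$.

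To finish, I need to choose an ordered basis of each $V_{r,s}$ with respect to which the restrictions of $P$, $P'$, and $Q$ are all upper triangular. The restrictions of these three matrices to $V_{r,s}$ form a pairwise commuting family over the algebraically closed field $\mathbb{F}$, so the simultaneous triangularization result \cite[Theorem~1.1.5]{radjavi2012simultaneous} invoked in Remark \ref{cyclic} supplies such a basis. Concatenating these ordered bases for all pairs $(r,s)$ (in the order $(1,1), \ldots, (1, \ell), \ldots, (k,1), \ldots, (k,\ell)$) yields an ordered basis of $V$; letting $C$ be the change-of-basis matrix from the standard basis to this one gives exactly the block structure claimed in the statement.

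There is no real obstacle: the lemma is a direct packaging of generalized eigenspace decomposition together with Radjavi--Rosenthal's simultaneous triangularization for commuting families. The only thing one must be careful about is to verify that the generalized eigenspaces of $P$ and of $P'$ are each preserved by the two other matrices, which is automatic from pairwise commutativity, and then to fix an ordering on the pairs $(r,s)$ so that the matrices $C P C^{-1}$, $C P' C^{-1}$, $C Q C^{-1}$ indeed have the advertised block-diagonal appearance.
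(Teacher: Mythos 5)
Your proposal is correct and follows essentially the same route as the paper: decompose $\mathbb{F}^n$ into the generalized eigenspaces of $P$, refine each piece by the generalized eigenspaces of $P'$, observe that $Q$ preserves each joint piece, and finish by simultaneous upper triangularization of the commuting restrictions via \cite[Theorem~1.1.5]{radjavi2012simultaneous}. The only cosmetic difference is that the paper phrases the refinement in terms of re-indexing the standard basis rather than an abstract direct sum decomposition.
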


\begin{proof} 
For $r=1, \ldots, k$, let $W_r$ be the generalized $\lambda_r$-eigenspace of $P$, and let $n_r = \dim W_r$. We index the standard ordered basis for $\mathbb{F}^n$ as follows:
\[
(e_{1,1}, \ldots, e_{1,n_1}, \ldots, e_{k,1}, \ldots, e_{k, n_k})
\]
We may assume that $W_r = \mathrm{Span}(e_{r,1}, \ldots, e_{r, n_r})$. Since each of $P', Q$ commutes with $P$, we have that $P',Q$ preserve the generalized eigenspaces of $P$, so $P, P', Q$ share a block-diagonal structure
\begin{alignat*}{4}
P &= \mathrm{diag}(P_1, \ldots, P_k) \\
P' &= \mathrm{diag}(P'_1, \ldots, P'_k) \\
Q &= \mathrm{diag}(Q_1, \ldots, Q_k)
\end{alignat*}
where $P_r, P'_r \in \mathrm{M}_{n_r\times n_r}(\mathbb{F})$. We may also assume that for some indexing
\[
(e_{r,1,1}, \ldots, e_{r,1,n_{r,1}}, \ldots, e_{r, \ell, 1}, \ldots, e_{r, \ell, n_{r,\ell}})
\]
of the ordered basis $(e_{r,1}, \ldots, e_{r,n_r})$ for  $W_r$, where the $n_{r,s}$ are nonnegative integers satisfying $\sum_{s=1}^\ell n_{r,s} = n_r$, we have that $\mathrm{Span}(e_{r,s,1}, \ldots, e_{r,s, n_{r,s}})$ is the generalized $\lambda'_s$-eigenspace of $P'_r$. Since each of $P_r, Q_r$ commutes with $P'_r$, we have that $P_r, Q_r$ preserve the generalized eigenspaces of $P'_r$, so $P_r, P'_r, Q_r$ share a block-diagonal structure
\begin{alignat*}{4}
P_r &= \mathrm{diag}(P_{r,1}, \ldots, P_{r,\ell}) \\
P'_r &= \mathrm{diag}(P'_{r,1}, \ldots, P'_{r,\ell}) \\
Q_r &= \mathrm{diag}(Q_{r,1}, \ldots, Q_{r,\ell})
\end{alignat*}
Now since $P_{r,s}, P'_{r,s}, Q_{r,s}$ pairwise commute, they are simultaneously upper triangularizable \cite[Theorem~1.1.5]{radjavi2012simultaneous}, and Lemma \ref{triangular} follows.
\end{proof}

The following lemma will allow us to reduce systems of equations whose unknowns lie in~$\mathbb{F}^*$, where $\mathbb{F}$ is some field, to systems of linear equations with integer unknowns. It is a step in the proof of Theorem 4.5 in \cite{button2017free}. We include Button's argument for the convenience of the reader.

\begin{lemma}\label{abelian}
Let $M$ be an integer matrix with $L$ columns and suppose there is a subset $I \subset \{1, \ldots, L\}$ such that for any $\bm{ \alpha} = (\alpha_1, \ldots, \alpha_L)^T \in \mathbb{Z}^L$ satisfying $M \bm{\alpha} = 0$, we have $\alpha_i = 0$ for~$i \in I$. Let $A$ be a torsion-free abelian group,  and suppose ${\bf a} = (a_1, \ldots, a_L)^T \in A^L$ satisfies $M {\bf a} = 0$. Then $a_i = 0$ for $i \in I$.
\end{lemma}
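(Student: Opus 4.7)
My plan is to reduce $M$ to a diagonal integer matrix via Smith normal form, and then use the torsion-free hypothesis on $A$ to kill the nonzero diagonal entries.

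First I will invoke Smith normal form to obtain $U \in \mathrm{GL}(k, \mathbb{Z})$ (where $k$ is the number of rows of $M$) and $V \in \mathrm{GL}(L, \mathbb{Z})$ such that $UMV = D$ is a rectangular diagonal matrix of the form $\mathrm{diag}(d_1, \ldots, d_r, 0, \ldots, 0)$, with $d_1, \ldots, d_r$ nonzero integers and $r = \mathrm{rank}\, M$. The integer kernel of $M$ is then precisely the image under $V$ of $\{0\}^r \oplus \mathbb{Z}^{L-r}$; equivalently, it is the $\mathbb{Z}$-span of the last $L - r$ columns of $V$. The hypothesis of the lemma therefore translates into the statement that $V_{ji} = 0$ whenever $j \in I$ and $i > r$, since each such column lies in the integer kernel of $M$.

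Next, given $\mathbf{a} \in A^L$ with $M\mathbf{a} = 0$, I will set $\mathbf{b} = V^{-1}\mathbf{a} \in A^L$ and observe that $D\mathbf{b} = U M V \mathbf{b} = U M \mathbf{a} = 0$. This is a diagonal system yielding $d_i b_i = 0$ for $i = 1, \ldots, r$, and torsion-freeness of $A$ then forces $b_i = 0$ for $i \leq r$. Computing $a_j$ for $j \in I$ from the identity $\mathbf{a} = V \mathbf{b}$, only the coordinates $b_i$ with $i > r$ can possibly contribute, and those are paired with entries $V_{ji}$ that the previous paragraph showed to vanish; thus $a_j = 0$ for every $j \in I$.

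I do not anticipate any real obstacle here: the one nontrivial input is Smith normal form, after which the argument is essentially a bookkeeping exercise tying together the diagonal reduction, the translation of the integer-kernel hypothesis through $V$, and the torsion-free condition on $A$.
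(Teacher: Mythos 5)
Your proof is correct, but it takes a genuinely different route from the paper's. The paper's argument is a short reduction to the integer case: since the $a_i$ generate a finitely generated torsion-free abelian group $A_0$, one chooses an isomorphism $\varphi \colon A_0 \to \mathbb{Z}^K$, applies the hypothesis to each coordinate vector $(\varphi_j(a_1), \ldots, \varphi_j(a_L))^T \in \mathbb{Z}^L$ (these satisfy $M\bm{\alpha} = 0$ because $\varphi_j$ is a homomorphism), and concludes $\varphi_j(a_i) = 0$ for $i \in I$ and all $j$, hence $a_i = 0$. You instead diagonalize $M$ itself via Smith normal form, characterize the integer kernel as the span of the last $L - r$ columns of $V$, observe that the hypothesis forces the $I$-rows of those columns to vanish, and then push $\mathbf{a}$ through $V^{-1}$ and back; torsion-freeness is used to kill $b_1, \ldots, b_r$ from the relations $d_i b_i = 0$. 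Both arguments ultimately rest on the same underlying structure theory (Smith normal form and the classification of finitely generated abelian groups are essentially equivalent), but the paper manipulates the target group $A$ while you manipulate the matrix $M$; the paper's version is shorter and avoids any explicit computation, whereas yours makes the shape of the integer kernel concrete and shows exactly how the hypothesis on $I$ is being used. Both are complete and correct.
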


\begin{proof}
Let $A_0 = \langle a_1, \ldots, a_L \rangle \subset A$. Then $A_0$ is a finitely generated torsion-free abelian group, so there is an isomorphism $\varphi: A_0 \rightarrow \mathbb{Z}^K$ for some $K$. For each $j = 1, \ldots, K$, we have
\[
M (\varphi_j(a_1), \ldots, \varphi_j(a_L))^T = 0
\] where $\varphi_j = p_j \circ \varphi$ and $p_j: \mathbb{Z}^K \rightarrow \mathbb{Z}$ is the projection onto the $j$th coordinate, so that $\varphi_j(a_i) = 0$ for $i \in I$. We conclude that $\varphi(a_i) = 0$, and hence $a_i = 0$, for $i \in I$.
\end{proof}

\section{Proof of Theorem \ref{main}}\label{proofofmain}

We divide Theorem \ref{main} into Theorem \ref{looptheorem} (the loop case) and Theorem \ref{edgetheorem} (the edge case), and prove each separately. 

\begin{theorem}\label{looptheorem}
Suppose $M \in \mathfrak{L}$ is not NPC, and let $\Gamma = \pi_1(M)$. Then $\Gamma$ contains a nontrivial virtually unipotent element. 
\end{theorem}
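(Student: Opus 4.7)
The plan is to mimic Button's approach for Gersten's free-by-cyclic group and make the relations fully explicit. Fix any finite-dimensional representation $\rho \colon \Gamma \to \mathrm{GL}(n, \mathbb{F})$ and pass to the algebraic closure. Set $F = \rho(f)$, $Z = \rho(z)$, $Z' = \rho(z')$, $T = \rho(t)$, $X_i = \rho(x_i)$, and $Y_i = \rho(y_i)$. Relation \ref{commutingloop} gives $[F, Z] = [F, X_i] = [F, Y_i] = 1$, and since $Z' = Z^{-1} \prod_i [X_i, Y_i]$ by relation \ref{productofcommutatorsloop}, also $[F, Z'] = 1$. I would then apply Lemma \ref{triangular} to the commuting pair $F, Z$ (with $F^a Z^b, F^c Z^d \in \langle F, Z \rangle$ as commuting companions) to obtain a basis in which $F$ and $Z$ are block upper triangular with diagonal entries $\alpha_i$ and $\zeta_i$; in this basis each $Z'$ preserves every generalized eigenspace $W_\mu$ of $F$.

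Next, I would extract three multiplicative constraints. From relation \ref{firstrowloop}, $TFT^{-1} = F^a Z^b$ shares its spectrum with $F$, so there is a permutation $\sigma \in S_n$ with $\alpha_i^a \zeta_i^b = \alpha_{\sigma(i)}$; write $S_\mu := \{i : \alpha_i = \mu\}$ and $T_\mu := \sigma^{-1}(S_\mu)$. Tracing the isomorphism $T \colon W_\mu \to W_\mu^{F^aZ^b}$ and invoking relation \ref{secondrowloop} identifies the eigenvalues of $Z'|_{W_\mu}$ as $\{\alpha_i^c \zeta_i^d : i \in T_\mu\}$. Restricting relation \ref{productofcommutatorsloop} to $W_\mu$ and taking determinants (each $[X_i|_{W_\mu}, Y_i|_{W_\mu}]$ has determinant~$1$) yields $\prod_{i \in S_\mu} \zeta_i \cdot \prod_{i \in T_\mu} \alpha_i^c \zeta_i^d = 1$ for every distinct eigenvalue $\mu$ of $F$. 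Passing to the torsion-free quotient $A$ of the subgroup of $\mathbb{F}^*$ generated by $\{\alpha_i, \zeta_i\}$, and writing $\lambda_i, \mu_i \in A$ for the images of $\alpha_i, \zeta_i$ in additive notation, the relations become the $\mathbb{Z}$-linear system
\[
a \lambda_i + b \mu_i = \lambda_{\sigma(i)}, \qquad \sum_{i \in S_\mu} \mu_i + c \sum_{i \in T_\mu} \lambda_i + d \sum_{i \in T_\mu} \mu_i = 0.
\]
By Lemma \ref{abelian}, to conclude that $p \lambda_i + q \mu_i = 0$ in $A$ for a fixed $(p, q) \in \mathbb{Z}^2 \setminus \{0\}$ independent of $\rho$ and for every $i$, it suffices to prove this identity for all $\mathbb{Z}$-valued solutions of the system.

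Summing the first family of equations over $i$ and the second over the distinct eigenvalues $\mu$ collapses the system to $\left(\begin{smallmatrix} a-1 & b \\ c & d+1 \end{smallmatrix}\right) \left(\begin{smallmatrix} \bar\lambda \\ \bar\mu \end{smallmatrix}\right) = 0$, whose determinant $a - d - 2$ is nonzero whenever $|a - d| \leq 1$—the non-NPC regime supplied by Theorem \ref{npc}—already forcing $\bar\lambda = \bar\mu = 0$. The hard part will be to upgrade this global statement to a uniform per-index relation: I expect to combine the cycle-iterated form $(a^k - 1) \lambda_i + b \sum_{j=0}^{k-1} a^{k-1-j} \mu_{\sigma^j(i)} = 0$ (for each length-$k$ orbit of $\sigma$) with the per-$\mu$ determinant relations, performing a case analysis on $a - d \in \{-1, 0, 1\}$ to extract the primitive integer vector $(p, q)$ from the resulting rank-one structure over $\mathbb{Q}$. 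The corresponding element $f^p z^q \in \pi_1(T)$ is then essential on the JSJ torus and so nontrivial in the aspherical $\Gamma$, and $\rho(f^p z^q)$ has all of its eigenvalues $\alpha_i^p \zeta_i^q$ equal to roots of unity—furnishing the required virtually unipotent element.
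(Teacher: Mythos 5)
Your setup is sound and essentially mirrors the paper's: you apply Lemma~\ref{triangular} to the commuting triple $F$, $F^aZ^b = TFT^{-1}$, $Z$; you extract per-eigenspace determinant constraints from relation~\ref{productofcommutatorsloop}; and you reduce to $\mathbb{Z}$ via Lemma~\ref{abelian}. Your identity $\prod_{i\in S_\mu}\zeta_i\cdot\prod_{i\in T_\mu}\alpha_i^c\zeta_i^d=1$ is the paper's equation (3.2) in a different indexing (your $S_\mu,T_\mu$ encode the joint-eigenspace multiplicities $n_{r,s}$). But you stop exactly where the proof does its real work. The summed statement $\bar\lambda=\bar\mu=0$ is a global average and constrains no individual $\lambda_i,\mu_i$, and your cycle-iteration identity, while correct, does not by itself yield a per-index relation $p\lambda_i+q\mu_i=0$. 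The paper closes the gap with two ideas absent from your sketch: it assumes WLOG that $\rho$ is \emph{indecomposable}, and after massaging the relations into $\sum_s(n_{r,s}+n_{s,r})\lambda_s=(a-d)n_r\lambda_r$ it runs an extremal argument on the integers $\lambda_r$. If they are not all equal, one looks at the maximal ones and finds that either $2n_r\lambda_r>(a-d)n_r\lambda_r\geq 2n_r\lambda_r$ (absurd, using $a-d\geq 2$), or $n_{r,s}=n_{s,r}=0$ across the max/non-max divide, which forces $\rho(t)$ (hence $\rho(\Gamma)$) to preserve a nontrivial direct-sum decomposition, contradicting indecomposability. This yields $\lambda_1=\cdots=\lambda_k$, hence $(a-1)\lambda_r+b\mu_{r,s}=0$, pinning the virtually unipotent element as $f^{a-1}z^b$. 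Nothing in your proposal substitutes for the indecomposability trick, and you never actually produce a definite $(p,q)$.

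There is also a sign issue you have inherited from the paper's text: you take the non-NPC regime to be $|a-d|\leq 1$, following Theorem~\ref{npc} as literally stated, but the paper's proof (correctly) operates under $a-d\geq 2$, and the inequality in Theorem~\ref{npc} appears to be reversed as printed (the Sol torus bundles realized with $g=0$ have $|a-d|\geq 3$ and are certainly not NPC). This is not merely cosmetic for your plan: your collapsed $2\times 2$ system has determinant $a-d-2$, which vanishes at $a-d=2$, so the step you flag as ``already forcing $\bar\lambda=\bar\mu=0$'' would fail in a case you genuinely need to cover, and your proposed case analysis over $a-d\in\{-1,0,1\}$ addresses the wrong set of gluings entirely.
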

\begin{proof}
By Remark \ref{inverse} and Theorem \ref{npc}, we have $\Gamma = \Gamma_{g,B}^\mathfrak{L}$ for some $g\geq 0$ and $B = \begin{pmatrix} a & b \\ c & d \end{pmatrix} \in \mathrm{GL}(2,\mathbb{Z})$ with $\det B = -1$, $b \neq 0$, and $a - d \geq 2$. We show that $f^{a-1}z^b \in \Gamma$ is virtually unipotent.

Let $\mathbb{F}$ be an algebraically closed field, $n \geq 1$, and $\rho: \Gamma \rightarrow \mathrm{GL}(n, \mathbb{F})$ any representation. We may assume that $\rho$ is indecomposable. Let $\lambda_1, \ldots, \lambda_k \in \mathbb{F}^*$ be the distinct eigenvalues of $\rho(f)$, and let $f' = tft^{-1}$. By Lemma \ref{triangular} and relation \ref{firstrowloop} in the presentation of $\Gamma$, we may assume further that
\begin{alignat*}{5}
\rho(f)  &= \mathrm{diag} ( F_{1,1}, \ldots, F_{1,k}, \ldots, F_{k,1}, \ldots, F_{k,k}) \\
\rho(f') &= \mathrm{diag} ( F'_{1,1}, \ldots, F'_{1,k}, \ldots, F'_{k,1}, \ldots, F'_{k,k}) \\
\rho(z) &=  \mathrm{diag} ( D_{1,1}Z_{1,1}, \ldots, D_{1,k}Z_{1,k}, \ldots, D_{k,1}Z_{k,1}, \ldots, D_{k,k}Z_{k,k})
\end{alignat*}
where $F_{r,s}, F'_{r,s}, Z_{r,s} \in \mathrm{GL}(n_{r,s}, \mathbb{F})$ are (possibly empty) upper triangular matrices, $D_{r,s}$ is a (possibly empty) diagonal matrix in $  \mathrm{GL}(n_{r,s}, \mathbb{F})$ whose diagonal entries are $|b|^{\text{th}}$ roots of $1$ in $\mathbb{F}$, and the only eigenvalue of $F_{r,s}$ (resp., $F'_{r,s}, Z_{r,s}$) is $\lambda_r$ (resp., $\lambda_s, \mu_{r,s}$), with $\mu_{r,s} \in \mathbb{F}^*$ satisfying
\begin{equation}\label{equation1}
\lambda_s = \lambda_r^a \mu_{r,s}^b.
\end{equation}
Since, by relation \ref{commutingloop} in the presentation of $\Gamma$, each of $\rho(z'), \rho(x_1), \rho(y_1), \ldots, \rho(x_g), \rho(y_g)$ commutes with $\rho(f)$, each preserves the generalized eigenspaces of $\rho(f)$, and so
\begin{alignat*}{5}
\rho(z') &= \mathrm{diag}(Z'_1, \ldots, Z'_k) \\
\rho(x_i) &= \mathrm{diag}(X_1^{(i)}, \ldots, X_k^{(i)}) \\
\rho(y_i) &= \mathrm{diag}(Y_1^{(i)}, \ldots, Y_k^{(i)}) 
\end{alignat*}
for some $Z'_r, X_r^{(i)}, Y_r^{(i)} \in \mathrm{GL}(n_r, \mathbb{F})$, where $n_r = \sum_{s=1}^k n_{r,s}$ is the dimension of the generalized $\lambda_r$-eigenspace of $\rho(f)$.

Let $V_r$ be the generalized $\lambda_r$-eigenspace of $\rho(f')$. Then $\rho(t)^{-1}V_r$ is the generalized $\lambda_r$-eigenspace of $\rho(t)^{-1}\rho(f')\rho(t) = \rho(f)$, and the characteristic polynomial of \[ \rho(z') \bigr|_{\rho(t)^{-1}V_r} = \rho(t)^{-1}\rho(f^ch^d)\rho(t) \bigr|_{\rho(t)^{-1}V_r} \] coincides with the characteristic polynomial of $\rho(f^cz^d)\bigr|_{V_r}$. Thus, up to multiplying each root by a root of $1$, the characteristic polynomial of the block $Z'_r$ is 
\[
(x - \lambda_1^c \mu_{1,r}^d)^{n_{1,r}}\ldots (x - \lambda_k^c\mu_{k,r}^d)^{n_{k,r}}.
\]

Now let $Z_r = \mathrm{diag}(D_{r,1}Z_{r,1}, \ldots, D_{r,k}Z_{r,k})$. Then, by relation \ref{productofcommutatorsloop} in the presentation of $\Gamma$, we have $Z_r Z'_r = \prod_{i=1}^g [X_r^{(i)}, Y_r^{(i)}]$, so that $\det(Z_rZ'_r) = 1$. It follows that
\begin{equation}\label{equation2}
\prod_{s=1}^k\mu_{r,s}^{n_{r,s}}(\lambda_s^c\mu_{s,r}^d)^{n_{s,r}} = 1
\end{equation}
in the quotient $A$ of the group of units $\mathbb{F}^*$ by its torsion subgroup. Viewing (\ref{equation1}) also as equations in $A$ and switching to additive notation within $A$, we obtain the equations
\begin{alignat}{5}
& \lambda_s = a \lambda_r + b \mu_{r,s} \label{equation1additiveloop} \\
& \sum_{s=1}^k  \left( n_{r,s}\mu_{r,s} +n_{s,r}(c\lambda_s + d\mu_{s,r})   \right) = 0. \label{equation2additiveloop}
\end{alignat}
Multiplying (\ref{equation2additiveloop}) by $b$ and substituting $\lambda_s - a\lambda_r$ for $b\mu_{r,s}$, we have
\[
\sum_{s=1}^k \biggr(  n_{r,s}(\lambda_s - a \lambda_r) + n_{s,r}\bigr(bc\lambda_s + d(\lambda_r - a\lambda_s)\bigr) \biggr) = 0
\]
and so
\begin{equation}\label{equation3}
\sum_{s=1}^k \bigr(n_{r,s} + (bc-ad)n_{s,r})\lambda_s = \lambda_r \sum_{s=1}^k (an_{r,s} - dn_{s,r}).
\end{equation}
Since $bc - ad = - \det B = 1$, the left-hand side of (\ref{equation3}) is equal to $\sum_{s=1}^k (n_{r,s} +n_{s,r} )\lambda_s$. On the other hand, since $\sum_{s=1}^k n_{s,r} = \sum_{s=1}^k n_{r,s} = n_r$, the right-hand side of (\ref{equation3}) is equal to $(a-d)n_r\lambda_r$. 

In summary, $\lambda_1, \ldots, \lambda_k$ satisfy
\begin{equation}\label{equation4}
\sum_{s=1}^k (n_{r,s} + n_{s,r})\lambda_s = (a-d)n_r\lambda_r
\end{equation}
as elements of $A$. We now show that if we set $A = \mathbb{Z}$, then (\ref{equation4}) implies $\lambda_1 = \ldots = \lambda_k$, so that
\[
(a-1)\lambda_r + b\mu_{r,s} = a \lambda_r - \lambda_s + b\mu_{r,s} = 0
\]
where the second equality follows from (\ref{equation1additiveloop}). By Lemma \ref{abelian}, it will follow that $(a-1)\lambda_r + b\mu_{r,s} = 0$ in the original torsion-free abelian group $A$, thus completing the proof.

To that end, suppose for a contradiction that the integers $\lambda_1, \ldots, \lambda_k$ are not all equal. Then we may assume \[\lambda_1 = \ldots = \lambda_{r_0} > \lambda_{r_0+1}, \ldots, \lambda_k\] for some $r_0 \in \{1, \ldots, k-1\}$. Thus, for $r=1, \ldots, r_0$, either we have $n_{r,s} + n_{s,r} = 0$ for $s > r_0$, or we obtain the contradiction
\begin{alignat*}{4}
2n_r \lambda_r = \sum_{s=1}^k (n_{r,s}+n_{s,r})\lambda_r > \sum_{s=1}^k (n_{r,s} + n_{s,r})\lambda_s = (a-d)n_r \lambda_r \geq 2n_r\lambda_r.
\end{alignat*}
We conclude that $n_{r,s} = n_{s,r} = 0$ for $r \leq r_0$ and $s > r_0$, so that $\rho(t)$ preserves the span of the first $\sum_{r=1}^{r_0}n_r$ standard basis vectors and the span of the last $\sum_{r=r_0+1}^kn_r$ standard basis vectors of $\mathbb{F}^n$. But then $\rho(\Gamma)$ also preserves each of these subspaces, contradicting the indecomposability of $\rho$.
\end{proof} \vspace{5mm}

\begin{theorem}\label{edgetheorem}
Suppose $M \in \mathfrak{E}$ is not NPC, and let $\Gamma = \pi_1(M)$. Then $\Gamma$ contains a nontrivial virtually unipotent element. 
\end{theorem}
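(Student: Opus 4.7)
The plan is to mimic the proof of Theorem~\ref{looptheorem}: apply Lemma~\ref{triangular} to simultaneously block-triangularize the pairwise-commuting matrices $\rho(f), \rho(f'), \rho(z)$, extract the integer constraints forced by the commutator relations $z = \prod[x_i,y_i]$ and $z' = \prod[x'_j, y'_j]$ together with the identifications $f' = f^a z^b$ and $z' = f^c z^d$, and invoke Lemma~\ref{abelian} to transfer an integer vanishing back to the torsion-free quotient of $\mathbb{F}^*$. Before starting, we use Remark~\ref{manipulatematrix} to normalize $B$: by inverting $B$ and negating a row or column, we can arrange both $a \neq 0$ and $ad \neq -1$. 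The latter is always achievable since $ad = -1$ together with $\det B = -1$ forces $bc = 0$ and hence $c = 0$ (as $b \neq 0$), after which negating the second row of $B$ replaces $d$ by $-d$ and yields $ad = 1 \neq -1$ without disturbing $a$. We shall prove that, under this normalization, $f$ is a nontrivial virtually unipotent element of $\Gamma$.

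\textbf{Setup and key equations.} Let $\rho \colon \Gamma \to \mathrm{GL}(n, \mathbb{F})$ be an indecomposable representation with $\mathbb{F}$ algebraically closed. Since $\langle f, z \rangle$ is abelian and contains both $f'$ and $z'$, the matrices $\rho(f), \rho(f'), \rho(z), \rho(z')$ pairwise commute. By Lemma~\ref{triangular} we may simultaneously put $\rho(f), \rho(f'), \rho(z)$ in block upper triangular form indexed by pairs $(r,s)$: on block $(r,s)$, the sole eigenvalues of $\rho(f)$ and $\rho(f')$ are $\lambda_r$ and $\lambda'_s$ respectively, while (exactly as in the loop case) $\rho(z) \bigr|_{(r,s)}$ decomposes as a product of a diagonal matrix of $|b|$-th roots of unity with an upper triangular block of sole eigenvalue $\mu_{r,s}$, satisfying $\lambda'_s = a\lambda_r + b\mu_{r,s}$ in the torsion-free abelian group $A := \mathbb{F}^*/\mathrm{torsion}$. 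Set $n_{r,s} = \dim(\text{block}(r,s))$, $n_r = \sum_s n_{r,s}$, $n'_s = \sum_r n_{r,s}$, and form $N = (n_{r,s})$, $D_R = \mathrm{diag}(n_r)$, $D_S = \mathrm{diag}(n'_s)$. Since $\rho(x_i), \rho(y_i)$ (resp.\ $\rho(x'_j), \rho(y'_j)$) commute with $\rho(f)$ (resp.\ $\rho(f')$) and preserve its generalized eigenspaces, the relations $z = \prod[x_i, y_i]$ and $z' = \prod[x'_j, y'_j]$ yield $\det(\rho(z) \bigr|_{W_r}) = 1$ and $\det(\rho(z') \bigr|_{V_s}) = 1$ on the generalized $\lambda_r$-eigenspace $W_r$ of $\rho(f)$ and the generalized $\lambda'_s$-eigenspace $V_s$ of $\rho(f')$. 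Passing to additive notation in $A$ and eliminating $\mu_{r,s}$ via $b\mu_{r,s} = \lambda'_s - a\lambda_r$ (using $bc - ad = 1$) produces the integer-coefficient linear equations
\begin{equation*}
N \bm{\lambda}' = a D_R \bm{\lambda} \qquad \text{and} \qquad N^T \bm{\lambda} = -d D_S \bm{\lambda}'.
\end{equation*}

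\textbf{Spectral step and main obstacle.} Since $a \neq 0$, eliminating $\bm{\lambda}$ from the first equation and substituting into the second produces $L \bm{\lambda}' = -ad\, \bm{\lambda}'$, where $L := D_S^{-1} N^T D_R^{-1} N$ is row-stochastic and similar to the symmetric Gram matrix $(D_R^{-1/2} N D_S^{-1/2})^T (D_R^{-1/2} N D_S^{-1/2})$, so all eigenvalues of $L$ are real and lie in $[0,1]$. Setting $A = \mathbb{Z}$ in Lemma~\ref{abelian}, the integer $-ad$ can be an eigenvalue of $L$ only if $-ad \in \{0,1\}$, i.e., $ad \in \{0, -1\}$; since $ad \neq -1$, either $d \neq 0$ (and then $-ad \notin [0,1]$, so $\bm{\lambda}' = 0$ and hence $\bm{\lambda} = 0$ by the first equation), or $d = 0$ (in which case $L \bm{\lambda}' = 0$ implies $N \bm{\lambda}' = 0$ by pairing with $\bm{\lambda}'$ in the $D_S$-metric, so again $\bm{\lambda} = 0$). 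Lemma~\ref{abelian} transfers the vanishing $\bm{\lambda} = 0$ to $A$, showing that every eigenvalue of $\rho(f)$ is a root of unity; since $f$ has infinite order in $\Gamma$, this is the desired nontrivial virtually unipotent element. The main novelty compared with the loop case is the absence of a direct extremal inequality like $a - d \geq 2$, whose role is taken by the spectral bound on the row-stochastic Gram matrix $L$; the preliminary normalization to $ad \neq -1$ is essential, because otherwise $-ad = 1$ lies in the spectrum of $L$ and $\bm{\lambda}$ need not vanish, in which case one would instead have to argue that $z$ (rather than $f$) is virtually unipotent.
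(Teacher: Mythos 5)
Your spectral reformulation via the row-stochastic Gram matrix $L = D_S^{-1} N^T D_R^{-1} N$ is an elegant repackaging of the positivity that the paper extracts from a quadratic discriminant, and it does work cleanly in the generic case. But there is a genuine gap in the normalization step that makes the overall conclusion false. When you negate the second row of $B$ to replace $(c,d)$ by $(-c,-d)$, you also negate $\det B$. Your derivation of the second linear equation $N^T \bm{\lambda} = -d\, D_S \bm{\lambda}'$ explicitly invokes $bc - ad = 1$, i.e.\ $\det B = -1$, so it is no longer valid after the negation. Tracking the sign properly, one finds $N^T \bm{\lambda} = (\det B)\, d\, D_S \bm{\lambda}'$, hence the eigenvalue of $L$ is $ad \cdot \det B$, not $-ad$. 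In the troublesome case $c = 0$ (forced when $bc = 0$, since $b \neq 0$), one has $ad = \det B = \pm 1$, so $ad \cdot \det B = 1$ no matter how you normalize: every $\det$-preserving move (negating two of the four rows/columns, or inverting) fixes the value of $ad$, while a single-row negation flips both $ad$ and $\det B$, leaving the product $ad \cdot \det B$ unchanged. So the boundary eigenvalue $1$ cannot be avoided, and your argument does not conclude that $\bm{\lambda} = 0$.

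This is not merely a gap in the proof — the conclusion itself is false in the $c = 0$ case. When $c = 0$, the defining relations of $\Gamma = \Gamma_{g,g',B}^{\mathfrak{E}}$ kill $z$ and $z'$ in the abelianization while leaving $f$ as a free generator of the abelianization $\mathbb{Z}^{2g+2g'+1}$; composing with any character sending $f$ to a non-root of unity gives a one-dimensional representation under which $\rho(f)$ has an eigenvalue that is not a root of unity. Hence $f$ is \emph{not} virtually unipotent in this case. The paper addresses this by splitting into two cases from the start: when $c > 0$ it proves $f$ is virtually unipotent (by the quadratic discriminant argument, which your $L$-spectrum argument could replace after fixing the sign and normalizing $a,b,c,d \geq 0$), but when $c = 0$ it proves instead that $z$ is virtually unipotent, via a separate induction on $k + \ell$. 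You anticipated exactly this alternative in your closing sentence ("one would instead have to argue that $z$ \ldots is virtually unipotent"), but the normalization you proposed to avoid it does not in fact avoid it. To repair the proof, you would need to drop the $ad \neq -1$ normalization, keep $\det B = -1$, run your spectral argument in the case $ad \neq -1$ (i.e.\ $c \neq 0$, after normalizing to nonnegative entries), handle $d = 0$ as you already do, and supply a new argument for $z$ in the case $c = 0$.
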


\begin{proof}
We have $\Gamma = \Gamma_{g,g',B}^\mathfrak{E}$ for some $g, g' \geq 1$, where $B = \begin{pmatrix} a & b \\ c & d \end{pmatrix}$ is a gluing matrix for $M$. Note that $b \neq 0$, and that, by Theorem \ref{npc}, one of $a,d$ is nonzero. By Remark \ref{manipulatematrix}, up to replacing $B$ with its inverse, we may assume $a \neq 0$. Furthermore, by Remark \ref{manipulatematrix} and the fact that $|\det B | = 1$, up to negating rows and columns of $B$, we may assume $a,b,c,d \geq 0$. We show that if $c =0$ (resp., $c > 0$) then $z$ (resp., $f$) is a virtually unipotent element of $\Gamma$.

Let $\mathbb{F}$ be an algebraically closed field, $n \geq 1$, and $\rho: \Gamma \rightarrow \mathrm{GL}(n, \mathbb{F})$ any representation. Let $\lambda_1, \ldots, \lambda_k \in \mathbb{F}^*$ (resp., $\lambda'_1, \ldots, \lambda'_\ell \in \mathbb{F}^*$) be the distinct eigenvalues of $\rho(f)$ (resp., $\rho(f')$). By Lemma \ref{triangular} and relation \ref{firstrowedge} in the presentation of $\Gamma$, we may assume that
\begin{alignat*}{5}
\rho(f)  &= \mathrm{diag} ( F_{1,1}, \ldots, F_{1,\ell}, \ldots, F_{k,1}, \ldots, F_{k,\ell}) \\
\rho(f') &= \mathrm{diag} ( F'_{1,1}, \ldots, F'_{1,\ell}, \ldots, F'_{k,1}, \ldots, F'_{k,\ell}) \\
\rho(z) &=  \mathrm{diag} ( D_{1,1}Z_{1,1}, \ldots, D_{1, \ell}Z_{1,\ell}, \ldots, D_{k,1}Z_{k,1}, \ldots, D_{k, \ell}Z_{k,\ell})
\end{alignat*}
where $F_{r,s}, F'_{r,s}, Z_{r,s} \in \mathrm{GL}(n_{r,s}, \mathbb{F})$ are (possibly empty) upper triangular matrices, $D_{r,s}$ is a diagonal matrix in $\mathrm{GL}(n_{r,s}, \mathbb{F})$ whose diagonal entries are $b^\text{th}$ roots of $1$ in $\mathbb{F}$, and the only eigenvalue of $F_{r,s}$ (resp., $F'_{r,s}, Z_{r,s}$) is $\lambda_r$ (resp., $\lambda'_s, \mu_{r,s}$), with $\mu_{r,s} \in \mathbb{F}^*$ satisying
\begin{equation}\label{equation1edge}
\lambda'_s = \lambda_r^a \mu_{r,s}^b.
\end{equation}

Since, by relation \ref{block1commuting} in the presentation of $\Gamma$, the $\rho(x_i), \rho(y_i)$ commute with $\rho(f)$, each of the former preserves the eigenspaces of $\rho(f)$. Thus, we have
\begin{alignat*}{4}
\rho(x_i) &= \mathrm{diag}(X_1^{(i)}, \ldots, X_k^{(i)}) \\
\rho(y_i) &= \mathrm{diag}(Y_1^{(i)}, \ldots, Y_k^{(i)})
\end{alignat*}
for some $X_r^{(i)}, Y_r^{(i)} \in \mathrm{GL}(n_r, \mathbb{F})$, where $n_r = \sum_{s=1}^k n_{r,s}$ is the dimension of the generalized $\lambda_r$-eigenspace of $\rho(f)$. Letting $Z_r = \mathrm{diag}(D_{r,1}Z_{r,1}, \ldots, D_{r,\ell}Z_{r, \ell})$, we have by relation \ref{block1productofcommutators} in the presentation of $\Gamma$ that
\[
Z_r = \prod_{i=1}^g [X_r^{(i)}, Y_r^{(i)}]
\]
for $r = 1, \ldots, k$. Thus, $\det Z_r = 1$, and so
\begin{equation}\label{equation2edge}
\prod_{s=1}^\ell \mu_{r,s}^{n_{r,s}} = 1
\end{equation}
in the quotient $A$ of $\mathbb{F}^*$ by its torsion subgroup. 

Since, by relation \ref{block2commuting} in the presentation of $\Gamma$, the $\rho(x'_i), \rho(y'_i)$ commute with $\rho(f')$, each of the former preserves the eigenspaces of $\rho(f')$. Thus, by a similar argument to the one given above, and by relation \ref{secondrowedge} in the presentation of $\Gamma$, we have
\begin{equation}\label{equation3edge}
\prod_{r=1}^k (\lambda_r^c\mu_{r,s}^d)^{n_{r,s}} = 1
\end{equation}
in $A$ for $s =1, \ldots, \ell$. Switching to additive notation within $A$, we obtain from (\ref{equation1edge}), (\ref{equation2edge}), (\ref{equation3edge}) the equations
\begin{alignat}{5}
& a \lambda_1 + b \mu_{1,s} = \ldots = a \lambda_k + b \mu_{k,s} \> \text{ for }s=1, \ldots, \ell, \label{equation4additive} \\
& \sum_{s = 1}^\ell n_{r,s}\mu_{r,s} = 0 \> \text{ for }r=1, \ldots, k, \label{equation2additive}\\
& \sum_{r=1}^k n_{r,s}(c\lambda_r + d\mu_{r,s}) = 0 \> \text{ for }s=1, \ldots, \ell. \label{equation3additive}
\end{alignat}
We now set $A = \mathbb{Z}$ and show that, in this context, equations (\ref{equation2additive}), (\ref{equation3additive}), and (\ref{equation4additive}) imply that if $c = 0$ (resp., $c > 0$) then $\mu_{r,s} = 0$ whenever $n_{r,s} > 0$ (resp., then $\lambda_r = 0$ for $r=1, \ldots, k$). By Lemma \ref{abelian}, the same statements will hold in the original torsion-free abelian group $A$, thus completing the proof. 

Suppose first that $c=0$. Note that since $| \det B| = 1$, this implies that $a = d = 1$, so that equations (\ref{equation4additive}),  (\ref{equation3additive}) are reduced to
\begin{alignat}{5}
& \lambda_1 + b\mu_{1,s} = \ldots = \lambda_k + b \mu_{k,s} \> \text{ for }s=1, \ldots, \ell, \label{equation4mappingtorus} \\
& \sum_{r=1}^k n_{r,s}\mu_{r,s} = 0 \> \text{ for }s=1, \ldots, \ell. \label{equation3mappingtorus} 
\end{alignat} 
We show by induction on $k+\ell$ that, in this case, $\mu_{r,s} = 0$ if $n_{r,s} > 0$. The base case $k+\ell = 2$ is trivial. By the symmetry of equations (\ref{equation2additive}), (\ref{equation3mappingtorus}), (\ref{equation4mappingtorus}), we may assume that $\mu_{k, 1} \geq \mu_{r,s}$ for all~$r$ and $s$, and that $\mu_{k, 1} \geq \ldots \geq \mu_{k, \ell}$. Note that the former implies that in particular $\mu_{k,1} \geq \mu_{r,1}$, so we obtain from (\ref{equation4mappingtorus}) that $\mu_{k, \ell} \geq \mu_{r, \ell}$ for $r = 1, \ldots, k$.   If $\mu_{k, \ell} \geq 0$, then since $\sum_{s=1}^\ell n_{k,s}\mu_{k,s} = 0$, we must have $n_{k,1}\mu_{k,1} = \ldots = n_{k,\ell}\mu_{k,\ell} = 0$. This implies that $\mu_{k,s} = 0$ if $n_{k,s} > 0$, so we may apply the induction hypothesis to the system of equations
\begin{alignat*}{5}
& \lambda_1 + b\mu_{1,s} = \ldots = \lambda_{k-1} + b \mu_{k-1,s} \> \text{ for }s=1, \ldots, \ell, \\
& \sum_{s = 1}^\ell n_{r,s}\mu_{r,s} = 0 \> \text{ for }r=1, \ldots, k-1, \\
& \sum_{r=1}^{k-1} n_{r,s}\mu_{r,s} = 0 \> \text{ for }s=1, \ldots, \ell. 
\end{alignat*}
Now suppose that $\mu_{k, \ell} < 0$. Since $\mu_{k, \ell} \geq \mu_{r, \ell}$ for $r = 1, \ldots, k$ and $\sum_{r=1}^k n_{r, \ell}\mu_{r, \ell} = 0$, we have that $n_{1,\ell}\mu_{1, \ell} = \ldots = n_{k, \ell}\mu_{k, \ell} = 0$. This implies that $\mu_{r, \ell} = 0$ if $n_{r, \ell} > 0$, so we may apply the induction hypothesis to the system of equations
\begin{alignat*}{5}
& \lambda_1 + b\mu_{1,s} = \ldots = \lambda_k + b \mu_{k,s} \> \text{ for }s=1, \ldots, \ell-1, \\
& \sum_{s = 1}^{\ell-1} n_{r,s}\mu_{r,s} = 0 \> \text{ for }r=1, \ldots, k, \\
& \sum_{r=1}^k n_{r,s}\mu_{r,s} = 0 \> \text{ for }s=1, \ldots, \ell-1. 
\end{alignat*}
This completes the proof for the case $c = 0$.

We assume for the remainder of the proof that $c > 0$. Define
\begin{alignat*}{4}
N = \begin{pmatrix} n_{1,1} & \ldots & n_{k,1} \\ \vdots & \ddots & \vdots \\ n_{1,\ell} & \ldots & n_{k,\ell} \end{pmatrix}, \quad {\bf u} = \begin{pmatrix} a \lambda_1 + b\mu_{1,1} \\ \vdots \\ a\lambda_1 + b\mu_{1, \ell} \end{pmatrix}, \quad {\bf w} =  \begin{pmatrix} c \lambda_1 \\ \vdots \\ c \lambda_k \end{pmatrix}
\end{alignat*}
and let $N_r$ be the $r^\text{th}$ column of $N$. We have
\begin{alignat*}{4}
{\bf u}^T N_r &= (a\lambda_r + b \mu_{r,1}, \ldots, a \lambda_r + b \mu_{r, \ell}) N_r \\
&= (a\lambda_r, \ldots, a \lambda_r)N_r + b (\mu_{r,1}, \ldots, \mu_{r, \ell}) N_r \\
&= (a\lambda_r, \ldots, a \lambda_r)N_r \\
&= \sum_{s=1}^\ell   n_{r,s} a \lambda_r
\end{alignat*}
where the first equality follows from (\ref{equation4additive}) and the third follows from (\ref{equation2additive}). Thus,
\begin{equation}\label{uNw1}
{\bf u}^T N {\bf w} = \left (\sum_{s=1}^\ell   n_{1,s} a \lambda_1, \ldots, \sum_{s=1}^\ell  n_{k,s} a \lambda_k \right) {\bf w} = \sum_{r,s} n_{r,s} ac \lambda_r^2.
\end{equation}
On the other hand, we have
\begin{alignat*}{4}
N {\bf w} = \begin{pmatrix} \sum_{r=1}^k n_{r,1}c \lambda_r \\ \vdots \\ \sum_{r=1}^k n_{r, \ell}c \lambda_r \end{pmatrix} = - \begin{pmatrix} \sum_{r=1}^k n_{r,1}d\mu_{r,1} \\ \vdots \\ \sum_{r=1}^k n_{r, \ell} d \mu_{r, \ell}\end{pmatrix}
\end{alignat*}
where the second equality follows from (\ref{equation3additive}). It follows that
\begin{alignat}{5}
- {\bf u}^T N {\bf w} = {\bf u}^T \begin{pmatrix} \sum_{r=1}^k n_{r,1}d\mu_{r,1} \\ \vdots \\ \sum_{r=1}^k n_{r, \ell} d \mu_{r, \ell} \end{pmatrix} = \sum_{r,s} n_{r,s} (a \lambda_1 + b \mu_{1, s})d\mu_{r,s} = \sum_{r,s} n_{r,s} (a \lambda_r + b\mu_{r,s})d\mu_{r,s} \label{uNw2}
\end{alignat}
where the last equality follows from (\ref{equation4additive}). Combining (\ref{uNw1}) and (\ref{uNw2}), we obtain
\begin{equation}\label{final}
0 = \sum_{r,s} n_{r,s} ac \lambda_r^2 + \sum_{r,s}n_{r,s} (a \lambda_r + b\mu_{r,s})d\mu_{r,s} = \sum_{r,s} n_{r,s} (bd\mu_{r,s}^2 + ad \lambda_r \mu_{r,s} + ac\lambda_r^2).
\end{equation}

We claim that $bd\mu_{r,s}^2 + ad \lambda_r \mu_{r,s} + ac\lambda_r^2 \geq 0$ for any $r$ and $s$. If $d=0$, this is clear. Otherwise, we may view $bd\mu_{r,s}^2 + ad \lambda_r \mu_{r,s} + ac\lambda_r^2$ as a quadratic polynomial in $\mu_{r,s}$ with positive leading coefficient $bd$ and discriminant
\[
\Delta_r = (ad-4bc) ad\lambda_r^2 = (\det B - 3bc)ad\lambda_r^2.
\]
Since $|\det B| = 1$, we have that $\Delta_r \leq 0$, and so $bd\mu_{r,s}^2 + ad \lambda_r \mu_{r,s} + ac\lambda_r^2 \geq 0$.

Now let $r \in \{1, \ldots, k\}$. We show that $\lambda_r = 0$. Indeed, we have $n_{r,s} > 0$ for some $s$ since $\sum_{s=1}^k n_{r,s} = n_r > 0$. Thus, by (\ref{final}) and the previous paragraph, we have \[bd\mu_{r,s}^2 + ad \lambda_r \mu_{r,s} + ac\lambda_r^2 = 0.\] 
If $d = 0$, this immediately implies that $\lambda_r = 0$. Now suppose $d, \lambda_r > 0$. Then $\Delta_r < 0$ and so $bd\mu_{r,s}^2 + ad \lambda_r \mu_{r,s} + ac\lambda_r^2 > 0$, a contradiction.
\end{proof}

\begin{remark} Note that if $c,d > 0$, we also obtain that $\mu_{r,s} = 0$ whenever $n_{r,s} > 0$, so that $\rho(z)$ is also a virtually unipotent matrix. Thus, if all the entries of a gluing matrix for a manifold $M \in \mathfrak{E}$ are nonzero, then any element of $\pi_1(M)$ representing a curve on the JSJ torus of $M$ is virtually unipotent. 
\end{remark}

\bibliographystyle{amsalpha}
\bibliography{biblio}

\end{document}